\def\section{\@startsection{section}{1}%
  \z@{1.1\linespacing\@plus\linespacing}{.8\linespacing}%
  {\normalfont\Large\scshape\centering}}
\theoremstyle{plain}
\newtheorem*{conj*}{Root Groups Conjecture}
\newtheorem*{thm1.2}{(1.2) Theorem}
\newtheorem*{thm1.3}{(1.3) Theorem}
\newtheorem*{thm1.4}{(1.4) Theorem}
\newtheorem*{prop*}{Proposition}
\newtheorem{prop}{Proposition}[section]
\newtheorem{thm}[prop]{Theorem}
\newtheorem{lemma}[prop]{Lemma}
\newtheorem{facts}[prop]{Facts}
\newtheorem{hyps}[prop]{Hypotheses}
\theoremstyle{definition}
\newtheorem*{Def*}{Definition}
\newtheorem{Defs}[prop]{Definitions}
\newtheorem*{notation*}{Notation}
\newtheorem{remark}[prop]{Remark}
\newtheorem{remarks}[prop]{Remarks}
\newtheorem*{remark*}{Remark}
\newcommand{\ff}{\mathbb{F}}
\newcommand{\one}{{\bf 1}}
\numberwithin{equation}{section}
\begin{document}
\title[characterization of the Octonions]{A short characterization of the Octonions}

\author[Kleinfeld and Segev]{Erwin Kleinfeld\qquad Yoav Segev}
\address{Erwin Kleinfeld \\
1555 N.~Sierra St.~Apt 120, Reno, NV 89503-1719, USA}
\email{erwinkleinfeld@gmail.com}
\keywords{Octonion algebra, Alternative ring, commutator, associator}
\subjclass[2000]{Primary: 17D05 ; Secondary: 17A35}

\address{Yoav Segev \\
         Department of Mathematics \\
        Ben-Gurion University \\
        Beer-Sheva 84105 \\
         Israel}
\email{yoavs@math.bgu.ac.il}

\begin{abstract}
In this paper we
prove that if $R$ is
a proper alternative ring whose additive group has no $3$-torsion
and whose non-zero commutators are not zero-divisors, then $R$
has no zero-divisors.  It follows from a theorem of Bruck and Kleinfeld
that if, in addition, the characteristic of $R$ is not $2,$ then
the central quotient of $R$ is an octonion division algebra over some field.
We include other characterizations of octonion division algebras
and we also deal with the case where $(R,+)$ has $3$-torsion.
\end{abstract}

\date{\today}
\maketitle

%
\section{Introduction}

In this paper we characterize  proper (i.e.~not associative) alternative rings 
whose central quotient (in the sense of \cite{BK}) is an octonion division algebra over some field,
as in Theorems \ref{thm 1.1}.  Theorem  \ref{thm 1.3} is a consequence
of Theorem \ref{thm 1.1} and additional observations.
This characterization differs from previous characterizations as we do not emphasize ideals.
For historical information on the octonions see \cite{Bl, E},
and for their connection with algebraic groups see \cite{SV}.

The second author would like to thank the first
author for contacting him, and for including him in this project.

Before we state our main results, we draw the
attention of the reader to Remark \ref{rem 3.4} with
regards to the notion of a zero divisor. 

Hypothesis (2) of the following theorem may
seem awkward.  However, it holds under
very natural condition, i.e., under one of the hypotheses \ref{hyps 1.2}(T1)
or \ref{hyps 1.2}(T2) below.

\begin{thm}\label{thm 1.1}
Let $R$ be an alternative not associative ring.
Assume that
\begin{enumerate}
\item
A non-zero  commutator in $R$  is not a divisor of zero in $R$;

\item
if $x\in R,$ and $y,z$ are in the commutative center of $R,$
then $(xy)z=x(yz).$
\end{enumerate}
Then 
\begin{itemize}
\item[(a)]
$R$ contains no divisors of zero.

\item[(b)]
If, in addition, the characteristic of $R$ is not $2,$
then the central quotient ring $R//C$
is an $8$-dimensional octonion division algebra
over its center--the fraction field of the
center $C$ of $R.$
\end{itemize}
\end{thm}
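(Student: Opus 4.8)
The plan is to prove (a) by contradiction and then deduce (b) by localizing and invoking Bruck--Kleinfeld. So suppose $R$ has a zero divisor, say $ab=0$ with $a,b\neq 0$. Since alternative rings are flexible, $a(ba)=(ab)a=0$, so if $ba\neq 0$ then $[a,b]=-ba$ is a nonzero commutator with $a[a,b]=-a(ba)=0$, contradicting~(1); hence I may assume $ab=ba=0$, and in particular $aba=bab=0$. Throughout I will use the two standard alternative-ring identities $[xy,z]=x[y,z]+[x,z]y+3(x,y,z)$ and $[x,yz]=[x,y]z+y[x,z]-3(x,y,z)$, Artin's theorem that any two elements generate an associative subring, and the Moufang laws.

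The device at the heart of (a) is the family of square-zero elements $w=(au)b$, $u\in R$. First, $b(au)=(a,b,u)$ because $ba=0$; since the associator $(b,au,b)$ has a repeated argument it vanishes, so $b\big((au)b\big)=\big(b(au)\big)b=(a,b,u)b$, and a right Moufang computation using $bab=0$ gives $(a,b,u)b=0$. Thus $bw=0$, while $wb=(au)b^2$ by the right alternative law, so $[b,w]=-wb=-(au)b^2$; flexibility then yields $[b,w]\,w=-(wb)w=-w(bw)=0$. Consequently, as soon as $(au)b^2\neq 0$ for some $u$, the element $[b,w]$ is a nonzero commutator with nonzero right annihilator $w$, contradicting~(1). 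This is the key point: a commutator coming naively from $ab=0$ is useless (by the reduction above $ba=0$ as well), and it is precisely this square-zero construction that manufactures a genuine commutator zero divisor.

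It remains to run out the degenerate cases, and here I expect the main work. If $(au)b^2=0$ for all $u$, then the companion element $(ua)b$ satisfies the same relations, giving $(ua)b^2=0$ and hence $[a,u]\,b^2=0$ for all $u$. If $b^2\neq 0$ this forces, via~(1), every $[a,u]=0$, i.e.\ $a$ lies in the commutative center; if $b^2=0$ then $[b,xb]=b(xb)$ is a commutator with $b\,[b,xb]=b^2(xb)=0$, so either some such commutator is a nonzero zero divisor (contradicting~(1)) or $[b,x]b=b(xb)=0$ for all $x$, again placing $b$ in the commutative center. In the commutative-center case, taking $x=a$ in the second identity gives $0=[a,yz]=-3(a,y,z)$, so the absence of $3$-torsion puts $a$ (or $b$) in the center $C$; then $[as,t]=a[s,t]$ is a commutator with $b\,(a[s,t])=(ba)[s,t]=0$, so either it is a nonzero commutator zero divisor or $a$ annihilates every commutator, whence every nonzero commutator would be a zero divisor. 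Either way~(1) forces $R$ to be commutative, and then the first identity gives $3(x,y,z)=0$, so by the no-$3$-torsion hypothesis $R$ is associative---contrary to assumption. (Note that hypothesis~(2) is not needed for (a); it enters only in (b).) This completes~(a).

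For (b), assume $\operatorname{char}R\neq 2$. By (a), $R$ has no zero divisors, so the commutative center $C$ is a commutative domain; hypothesis~(2), which makes the elements of $C$ associate with all of $R$, is exactly what legitimizes forming the central localization $R//C=(C\setminus\{0\})^{-1}R$ as an alternative algebra over the field $F=\operatorname{Frac}(C)$ with center $F$. This localization inherits from $R$ both the absence of zero divisors and non-associativity. The theorem of Bruck and Kleinfeld then applies: a not-associative alternative ring without zero divisors whose center is a field is, in characteristic $\neq 2$, an $8$-dimensional octonion division algebra over that field. Hence $R//C$ is an octonion division algebra over $F$, giving (b).
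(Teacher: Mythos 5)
Your argument for part (a) is correct, and genuinely different from the paper's, up to the point where you conclude that $a$ (or $b$) lies in the commutative center $Z$: the reduction to $ab=ba=0$, the square-zero elements $w=(au)b$ with $bw=0$, $wb=(au)b^2$ and $[b,w]w=0$, and the case analysis on $b^2$ all check out. But the endgame breaks down: twice you invoke ``the absence of $3$-torsion'' --- first to promote $a\in Z$ to $a\in C$ from $3(a,y,z)=0$, and again at the very end to pass from $3(x,y,z)=0$ to associativity --- and no such hypothesis appears in Theorem \ref{thm 1.1}. What you have actually proved is part (a) under hypotheses \ref{hyps 1.2}(H1) and \ref{hyps 1.2}(T1), i.e.\ an instance of Theorem \ref{thm 1.3}, not Theorem \ref{thm 1.1} itself.

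Moreover, your parenthetical claim that hypothesis (2) is not needed for (a) is false, and Remark \ref{rem 1.5}(2) explains why: by \cite{GR} there exist commutative alternative non-associative algebras over $\ff_3$; these satisfy hypothesis (1) vacuously (all commutators vanish) yet contain nilpotent associators, so (a) fails outright once (2) is dropped. The paper resolves exactly the case your proof gets stuck on --- a zero divisor lying in $Z$ --- by a different route (Lemma \ref{lem Z nzd}): if $z\in Z$ kills a nonzero $w$, then $w\in Z$ as well by Lemma \ref{lem R-Z nzd}, hypothesis (2) gives $(xz)w=x(zw)=0$ and hence $xz\in Z$ for $x$ a factor of a nonzero commutator $(x,y)$, and then equation \eqref{eq x,y,r} forces $(x,y)z=0$, contradicting (1). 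To repair your proof you would have to replace both appeals to $3$-torsion by an argument of this kind that actually uses hypothesis (2); as written, the proof does not establish the stated theorem.
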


We note that part (b) of Theorem \ref{thm 1.1}
is a consequence of part (a) and \cite[Theorem A]{BK}.
The notion of ``central quotient ring'' comes from \cite{BK}.

Consider now the following hypotheses.

\begin{hyps}\label{hyps 1.2}
$R$ is an alternative, not associative ring.
\begin{itemize}
\item[(H1)]
A non-zero  commutator in $R$  is not a divisor of zero in $R.$
\item[(H2)]
\begin{enumerate}
\item
no non-zero commutator of $R$ is nilpotent;

\item
non-zero elements of the nucleus of $R$ are not divisors of zero in $R.$
\end{enumerate}

\item[(H3)]
\begin{enumerate}
\item
no non-zero commutator of $R$ is nilpotent;

\item
$R$ has an identity;

\item
the nucleus of $R$ is a simple ring.
\end{enumerate}

\item[(T1)]
$3x=0\implies x=0,$ for all $x\in R.$

\item[(T2)]
\begin{enumerate}
\item
Commutative subrings of $R$ are associative;

\item
$2x=0\implies x=0,$ for all $x\in R.$
\end{enumerate}
\end{itemize}
\end{hyps}

As a consequence of Theorem \ref{thm 1.1} we prove
the following (multi-) theorem.

\begin{thm}\label{thm 1.3}
Let $R$ be an alternative not associative ring.
Assume that
\begin{enumerate}
\item
At least one of the hypotheses \ref{hyps 1.2}(H1), \ref{hyps 1.2}(H2), \ref{hyps 1.2}(H3) holds  in $R.$

\item
At least one of the hypotheses
\ref{hyps 1.2}(T1), \ref{hyps 1.2}(T2)
holds in $R.$
\end{enumerate}
Then
\begin{itemize}
\item[(a)]
$R$ contains no divisors of zero.

\item[(b)]
If, in addition, the characteristic of $R$ is not $2,$
then the central quotient ring $R//C$
is an $8$-dimensional octonion division algebra
over its center--the fraction field of the
center $C$ of $R.$
\end{itemize}
\end{thm}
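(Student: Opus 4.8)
The plan is to derive Theorem~\ref{thm 1.3} from Theorem~\ref{thm 1.1}: since the two conclusions (a) and (b) are word for word those of Theorem~\ref{thm 1.1}, it suffices to check that the hypotheses of \ref{thm 1.1} --- namely that a nonzero commutator is not a zero divisor, and that $(xy)z=x(yz)$ whenever $y,z$ lie in the commutative centre --- are implied by the assumptions of \ref{thm 1.3}. Thus I would split the work into showing that each of \ref{hyps 1.2}(H1), (H2), (H3) yields hypothesis~(1), and that each of \ref{hyps 1.2}(T1), (T2) yields hypothesis~(2).

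For hypothesis~(2) the main tool is the commutator--associator identity
\[ 3(x,y,z)=[x,y]z+y[x,z]-[x,yz], \]
valid in every alternative ring (it follows from the skew-symmetry of the associator). If $y$ lies in the commutative centre, then applying the identity with $y$ in the first slot gives $3(y,x,z)=[y,x]z+x[y,z]-[y,xz]=-[y,xz]=0$, since $y$ commutes with $x$, with $z$, and with the product $xz$; hence $3(x,y,z)=0$ identically. Under \ref{hyps 1.2}(T1) the absence of $3$-torsion immediately yields $(x,y,z)=0$, which is hypothesis~(2). Under \ref{hyps 1.2}(T2) I would instead show that the subring $\langle x,y,z\rangle$ is commutative: the generators $y,z$ commute with everything, the same identity shows that products such as $yz$ again lie in the commutative centre, and a short induction (clearing the factors of $2$ produced by the linearized alternative laws by means of \ref{hyps 1.2}(T2)(2)) shows all its elements commute; then \ref{hyps 1.2}(T2)(1) makes this subring associative, so again $(x,y,z)=0$.

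For hypothesis~(1), \ref{hyps 1.2}(H1) is literally the statement, so there is nothing to prove. For \ref{hyps 1.2}(H2) I would use the standard fact that a suitable power of every commutator is nuclear: if $c=[x,y]\neq 0$ then, as $c$ is not nilpotent by (H2)(1), the nuclear power $c^{m}$ is a nonzero element of the nucleus, hence not a zero divisor by (H2)(2); and if $ca=0$ then, since $\langle c,a\rangle$ is associative by Artin's theorem, $c^{m}a=c^{\,m-1}(ca)=0$, whence $a=0$, so $c$ is not a zero divisor. For \ref{hyps 1.2}(H3) I would reduce to (H2) by proving that its hypotheses force the (associative) nucleus $N$ to be a domain: a square-zero element $t\in N$ would give the nilpotent commutator $[t,xt]=txt$, which by (H3)(1) must vanish for all $x$, i.e.\ $tNt=0$, and simplicity of $N$ then forces $t=0$; thus $N$ has no nilpotents, and a simple reduced ring is a domain. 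One then upgrades this to~(H2)(2).

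The two reductions feeding Theorem~\ref{thm 1.1} are where the content lies, and I expect \ref{hyps 1.2}(H3) to be the main obstacle. The clean part --- that the nucleus is a domain --- is not enough, because a simple associative ring with identity can be a domain without being a division ring (a Weyl algebra is simple and a domain, yet has non-invertible elements), so simplicity by itself does not prevent a nonzero $n\in N$ from annihilating some $a\in R$ lying outside $N$. Ruling this out requires bringing in the non-associative structure of $R$ --- in effect showing the nucleus must be central, hence a field --- and this, rather than any of the more formal steps, is where I expect the real difficulty; establishing commutativity of $\langle x,y,z\rangle$ in the \ref{hyps 1.2}(T2) case is a close second.
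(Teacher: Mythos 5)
Your overall strategy is exactly the paper's: Theorem~\ref{thm 1.3} is deduced from Theorem~\ref{thm 1.1} via Proposition~\ref{prop 1.4}, and your treatments of (H1), (H2), (T1) and (T2) match the paper's Lemmas~\ref{lem commutators}, \ref{lem Z a subring}(3) and \ref{commutative} (your commutator--associator identity is a relabelling of equation~\eqref{eq x,y,r}). However, the (H3) branch has a genuine gap, and you say so yourself. Proving that the nucleus $N$ is a domain does not yield \ref{hyps 1.2}(H2)(2), which requires that a nonzero $n\in N$ annihilate no nonzero element of all of $R$; your Weyl-algebra remark correctly identifies why simplicity plus ``domain'' is insufficient, but you then defer the decisive step (``one upgrades this to (H2)(2)'', ``showing the nucleus must be central, hence a field... is where I expect the real difficulty'') without supplying it. As it stands, the reduction of (H3) to hypothesis (1) of Theorem~\ref{thm 1.1} is not proved.

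The missing idea is the paper's Lemma~\ref{lem nucleus}, which rests on Lemma~\ref{p of R}: nuclear elements commute with associators, $(nx,y,z)=n(x,y,z)=(x,y,z)n$, and $(n',n)(x,y,z)=0$ for all $n,n'\in N$. One sets $I=\{n\in N\mid n(x,y,z)=0\text{ for all }x,y,z\in R\}$; these identities make $I$ a two-sided ideal of $N$, and $\one\notin I$ because $R$ is not associative, so simplicity forces $I=0$. Since every commutator $(n',n)$ of nuclear elements lies in $I$, the nucleus is commutative, and a commutative simple ring with identity is a field. Invertibility of $n$ inside $N$ then gives $t=(n^{-1}n)t=n^{-1}(nt)=0$ whenever $nt=0$ with $t\in R$, which is (H2)(2), and Lemma~\ref{lem commutators} finishes the reduction. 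Note that this argument makes no use of your nilpotency computation $[t,xt]=txt$; the whole of the (H3) case is carried by the associator-annihilator ideal $I$, which is the ingredient your proposal lacks.
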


Indeed, Theorem \ref{thm 1.3} is an immediate
consequence of Theorem \ref{thm 1.1} and the following
proposition.

\begin{prop}\label{prop 1.4}
Let $R$ be an alternative, not associative ring.

\begin{enumerate}
\item
If $R$ satisfies one of the hypotheses  \ref{hyps 1.2}(H2) or  \ref{hyps 1.2}(H3),
then $R$ satisfies hypothesis (1) of Theorem \ref{thm 1.1}.

\item
If $R$ satisfies hypothesis \ref{hyps 1.2}(H3), then the center of $R$ is a field.

\item
If $R$ satisfies one of the hypotheses  \ref{hyps 1.2}(T1) or  \ref{hyps 1.2}(T2),
then $R$ satisfies hypothesis (2) of Theorem \ref{thm 1.1}.
\end{enumerate} 
\end{prop}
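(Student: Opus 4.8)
The plan is to treat the three assertions separately, deriving in each case the relevant hypothesis of Theorem~\ref{thm 1.1} from the structural input. Throughout write $[a,b]=ab-ba$ for the commutator and $(a,b,c)=(ab)c-a(bc)$ for the associator, recalling that in an alternative ring the associator is an alternating function of its three arguments and that the left and right alternative laws read $(a,a,b)=0=(b,a,a)$. I would begin with part~(3), since it isolates the role of the torsion hypotheses. Let $K$ denote the commutative center, i.e.\ the set of elements commuting with every element of $R$, and suppose $y,z\in K$. The first step is the purely formal identity, valid in every alternative ring, that if $y\in K$ then
\[
3\,(x,y,z)=0\qquad\text{for all }x,z\in R.
\]
This comes out of a short manipulation: since the associator is alternating one has $(x,y,z)=-(x,z,y)=-(y,x,z)$, and rewriting the outer expressions using $xy=yx$, $zy=yz$ and $(xz)y=y(xz)$ collapses $2(x,y,z)$ to $(y,x,z)=-(x,y,z)$. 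Under hypothesis~\ref{hyps 1.2}(T1) the absence of $3$-torsion then forces $(x,y,z)=0$, which is exactly hypothesis~(2) of Theorem~\ref{thm 1.1}. Under hypothesis~\ref{hyps 1.2}(T2) I would instead argue that, because $y$ and $z$ lie in $K$, the subring generated by $x,y,z$ is commutative; hypothesis~\ref{hyps 1.2}(T2)(a) then makes that subring associative, so $(x,y,z)=0$ directly. The delicate point here is verifying commutativity in the non-associative setting, where one cannot move the commuting factors $y,z$ past each other without invoking the very associativity one is trying to prove; this is where Artin's theorem on two-generated subrings and the centrality of $y,z$ must be combined carefully.

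For part~(1) the bridge to the nucleus is the fact that, for any $x,y$, the square $[x,y]^2$ lies in the nucleus $N$ of $R$ (in the octonion model it is in fact a central scalar). By hypothesis~\ref{hyps 1.2}(H2)(a) or \ref{hyps 1.2}(H3)(a) a nonzero commutator $c=[x,y]$ is not nilpotent, so $c^2\neq 0$ and we have a nonzero nuclear element. Under~\ref{hyps 1.2}(H2): if $c$ were a divisor of zero, say $cz=0$ with $z\neq 0$, then the left alternative law gives $c^2z=(cc)z=c(cz)=0$, exhibiting the nonzero nuclear element $c^2$ as a divisor of zero and contradicting~\ref{hyps 1.2}(H2)(b); the case $zc=0$ is symmetric via the right alternative law. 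Under~\ref{hyps 1.2}(H3) the aim is to reduce to the same situation by showing that nonzero nuclear elements are not divisors of zero. Here I would use that $R$ has an identity, so $1\in N$ and $N$ is a simple ring with $1$: the two-sided ideal of $N$ generated by the nonzero nuclear element $c^2$ is all of $N$, and I would leverage this, together with the absence of nilpotent commutators, to conclude that $c^2$ is invertible in $N$. An invertible nuclear element is never a divisor of zero in $R$, since $nw=0$ forces $w=n^{-1}(nw)=0$ by nuclear associativity, and the argument of the H2 case then applies.

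Part~(2) is the cleanest. Under~\ref{hyps 1.2}(H3), a nonzero central element $c$ lies in $N$ and commutes with $N$, so $cN=Nc$ is a nonzero two-sided ideal of the simple ring $N$ and hence equals $N$; thus $c$ has a two-sided inverse $c'$ in $N$. Using only nuclear associativity one checks $c(c'r-rc')=(cc')r-(cr)c'=r-(rc)c'=r-r(cc')=0$ for every $r\in R$, and multiplying on the left by $c'$ gives $c'r-rc'=0$, so $c'\in K$; since also $c'\in N$ we get $c'\in C$. Hence every nonzero element of the commutative, associative ring $C$ is invertible in $C$, and as $1\in C$ the center $C$ is a field.

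The main obstacle is the H3 portion of part~(1): passing from ``$N$ is simple with identity and no commutator of $R$ is nilpotent'' to the invertibility of $c^2$ in $N$. Simplicity alone permits divisors of zero, as in matrix rings, so the non-nilpotence of commutators of $R$ must be fed back into the associative ring $N$ to exclude such behavior; relating zero divisors of $N$ to commutators of the ambient non-associative $R$ is the crux. The secondary difficulty is the commutativity verification needed for the T2 branch of part~(3).
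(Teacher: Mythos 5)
Your outline follows the same decomposition as the paper, and your part~(2) and the T1 branch of part~(3) are essentially correct (your part~(2), extracting the inverse of a central element directly from the simplicity of $N$ and checking it is central, is a clean shortcut past the paper's route, which first proves that $N$ is a field as in Lemma~\ref{lem nucleus}). But there are three genuine problems. First, your bridge to the nucleus in part~(1) rests on the claim that $[x,y]^2\in N$; this is not a known identity for general alternative rings and is not what the paper uses. The available fact (Facts~\ref{facts}(1), from Kleinfeld's theorem) is that $v^4\in N$ for any commutator $v$. The repair is routine --- $v$ non-nilpotent gives $v^4\neq 0$, and $vt=0$ gives $v^4t=0$ by Artin's theorem on two-generated subrings --- but as written the step is unjustified.

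Second, and more seriously, the H3 branch of part~(1) is not proved: you correctly identify that one must show non-zero nuclear elements are not zero divisors, and you say you would ``leverage'' simplicity and non-nilpotence of commutators to make $c^2$ invertible in $N$, but you supply no argument, and you yourself flag this as the unresolved crux. The paper's mechanism is different and essential: one first shows $N$ is \emph{commutative}. By Lemma~\ref{p of R}(3), $(n',n)(x,y,z)=0$ for all $n,n'\in N$, so every commutator of nuclear elements lies in the ideal $I=\{n\in N\mid n(x,y,z)=0\ \forall x,y,z\}$ of $N$; since $R$ is not associative, $1\notin I$, so simplicity forces $I=0$, whence $N$ is commutative, hence (being simple with $1$) a field, and invertible nuclear elements are never zero divisors. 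Without some substitute for this step your H3 case does not close. Third, in the T2 branch of part~(3) you defer the commutativity of the subring generated by $x,y,z$ to a ``delicate point'' without carrying it out; the paper's Lemma~\ref{commutative} does exactly this, and it genuinely uses the $2$-torsion-freeness of (T2)(b): for pairwise commuting $x,y,z$ one has $(xy,z)=3(x,y,z)$ and $(yx,z)=-3(x,y,z)$, so $6(x,y,z)=0$ and hence $(xy,z)=0$, after which an induction on word length gives commutativity. Since you never invoke (T2)(b) in this step, your sketch as it stands cannot be completed into the intended argument.
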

\begin{proof}
See the end of section 2.
\end{proof}

\begin{remarks}\label{rem 1.5}
\begin{enumerate}
\item
The exclusion of characteristic $2$ in part (b) of Theorem \ref{thm 1.1}
is because the situation in this characteristic is different,
and \cite{BK} does not deal with characteristic $2.$

\item
In a commutative alternative ring 
$(x,y,z)^3=0,$ for each 
associator $(x,y,z)$ (see \cite[Lemma 9, p.~706]{S}).
Hence the hypothesis that commutative subrings are
associative is necessary if we want to prove 
that $R$ has no zero divisors.  Indeed, by
\cite{GR}, there are commutative
alternative algebras over $\ff_3$ which are not associative.
In particular, Theorem \ref{thm 1.1} is false once
hypothesis (2) is removed.

\item
{\bf Question.} Is it always true
in any alternative ring $R,$ that if $x\in R$ and $y,z$
are in the commutative center of $R$ (i.e.~$x,y,z$
are as in hypothesis 
(2) of Theorem \ref{thm 1.1}), then the
subring of $R$ generated by $x,y,z$ is commutative? 
\end{enumerate}
\end{remarks}

%
\section{Preliminaries on alternative rings}

Our main references for alternative rings are \cite{K1, K2}.
Let $R$ be a ring, not necessarily with $\one$ and not
necessarily associative.  

\begin{Defs}
Let $x,y,z\in R.$
\begin{enumerate}
\item
The {\bf associator} $(x,y,z)$ is defined to be
\[
(x,y,z)=(xy)z-x(yz).
\]

\item
The {\bf commutator} $(x,y)$ is defined to be
\[
(x,y)=xy-yx.
\]

\item
$R$ is an {\bf alternative ring} if
\[
(x,y,y)=0=(y,y,x),
\]
for all $x,y\in R.$

\item
The {\bf nucleus} of $R$ is denoted $N$ and defined by
\[
N=\{n\in R\mid (n,R,R)=0\}.
\]
Note that in an alternative ring the associator is skew symmetric in its $3$
variables (\cite[Lemma 1]{K2}).  Hence   $(R,n,R)=(R,R,n)=0,$ for $n\in N.$

\item
The {\bf center} of $R$ is denoted $C$ and defined
\[
C=\{c\in N\mid (c,R)=0\}.
\]

\item
The {\bf commutative center} of $R$ is denoted here by $Z$ and defined by
\[
Z=\{z\in R\mid (z,R)=0\}.
\]
\end{enumerate}
\end{Defs}

In the remainder of this paper {\bf $R$ is an alternative ring}
which is {\bf not associative}. 
$N$ denotes the nucleus of $R,$ $C$ its center and $Z$ its commutative center.

\begin{facts}\label{facts}
Let $R$ be an alternative ring.

\begin{enumerate}
\item
If $v\in R$ is a commutator, then $v^4\in N.$ 

\item
If $v\in R$ is a commutator, then $[v^2,R,R]v=0.$

\item
Any subring of $R$ generated by two elements is associative.
\end{enumerate}
\end{facts}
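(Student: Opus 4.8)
The plan is to assemble these three facts from the basic identities of an alternative ring, treating the classical ones by reduction to known results. Beyond the alternating property of the associator already recorded, I would use the \emph{flexible law} $(x,y,x)=0$ (immediate from skew-symmetry) and the \emph{Moufang identities} $(xyx)z=x(y(xz))$, $z(xyx)=((zx)y)x$, $(xy)(zx)=x(yz)x$, which are standard in alternative rings; I would cite \cite{K1,K2} for these. I would also record the commutator expansion
\[
(xy,z)=x(y,z)+(x,z)y+3(x,y,z),
\]
obtained by expanding both sides via $(ab)c=a(bc)+(a,b,c)$ and collecting, using skew-symmetry, the three resulting associators that are all equal to $(x,y,z)$; this is the identity responsible for the role of $3$-torsion throughout the paper. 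Finally I invoke the standard square identity $(w^2,y,z)=w(w,y,z)+(w,y,z)w$, a consequence of the Moufang laws.

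Part (3) is \emph{Artin's theorem}: the subring $\langle a,b\rangle$ is associative. I would prove it by showing that every associator $(u_1,u_2,u_3)$ with the $u_i$ monomials in $a,b$ vanishes. Because the associator is alternating it is already zero whenever two arguments coincide, so by skew-symmetry it suffices to treat monomials, and the alternative and Moufang identities allow one to strip a leading or trailing factor and induct on total degree. This is classical and I would simply cite \cite{K1,K2}.

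For the commutator facts I would first reduce (1) to (2). Applying the square identity with $w=v^2$ gives $(v^4,r,s)=v^2(v^2,r,s)+(v^2,r,s)v^2$. By the left alternative law $v^2u=v(vu)$ and the right alternative law $uv^2=(uv)v$, with $u=(v^2,r,s)$, the two terms become $v\big(v(v^2,r,s)\big)$ and $\big((v^2,r,s)v\big)v$. The inner factor $(v^2,r,s)v$ vanishes by (2), and $v(v^2,r,s)$ vanishes by the left mirror of (2); that mirror follows by applying (2) in the opposite ring, in which the alternating associator is unchanged and $v=[b,a]$ is again a commutator. Hence $(v^4,r,s)=0$, i.e.\ $v^4\in N$.

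The crux is therefore (2): $(v^2,r,s)v=0$ for a commutator $v=(a,b)$. Using the square identity, flexibility, and right alternativity I can rewrite
\[
(v^2,r,s)v=\big(v(v,r,s)\big)v+\big((v,r,s)v\big)v=v\big((v,r,s)v\big)+(v,r,s)v^2,
\]
reducing the claim to a statement about the single associator $(v,r,s)$ attached to the commutator $v$. Showing that this combination vanishes is the one genuinely delicate point: it rests on the commutator expansion above together with the Teichm\"uller identity and the skew-symmetry of the Kleinfeld function $f(w,x,y,z)=(wx,y,z)-x(w,y,z)-(x,y,z)w$, and I would carry it out following \cite{K1,K2}. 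I expect this commutator computation, rather than the reductions surrounding it, to be the main obstacle.
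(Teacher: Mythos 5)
Your proposal is correct and ends up in the same place as the paper, which disposes of all three facts purely by citation: (1) and (2) to Kleinfeld's Theorem 3.1 in \cite{K1} and (3) to Artin's theorem, exactly the sources you defer to for the substantive content. The extra scaffolding you supply --- the identity $(xy,z)-x(y,z)-(x,z)y=3(x,y,z)$, the square identity $(v^2,y,z)=v(v,y,z)+(v,y,z)v$ from the skew-symmetry of the Kleinfeld function, and the derivation of (1) from (2) together with its opposite-ring mirror --- is all sound, but goes beyond what the paper itself records.
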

\begin{proof}
For (1)\&(2) see \cite[Theorem 3.1]{K1}. Part (3) is a theorem of E.~Artin.
\end{proof}

\begin{lemma}\label{lem Z a subring}
\begin{enumerate}
\item
Let $x,y\in R,$ with $(x,y)=0,$ and let $z\in Z.$
Then $xz$ commutes with $y.$

\item
The commutative center $Z$ of $R$ is a subring of $R.$

\item
If $3x=0\implies x=0,$ for all $x\in R,$ then $Z=C.$
\end{enumerate}
\end{lemma}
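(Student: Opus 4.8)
The plan is to reduce all three parts to a single commutator--associator identity. First I would record the identity, valid in every alternative ring,
$$(xy, z) = x\,(y, z) + (x, z)\,y + 3\,(x, y, z),$$
obtained by expanding both sides from the definitions of commutator and associator and collecting the three residual associators $(x,y,z)$, $-(x,z,y)$, $(z,x,y)$; these coincide with $(x,y,z)$ up to sign because the associator is skew-symmetric in its three arguments (as noted right after the definition of the nucleus, citing \cite[Lemma 1]{K2}), so their sum is $3(x,y,z)$. This identity is the engine for everything that follows.

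The first consequence is that $3(x,y,z)=0$ whenever $z\in Z$: each of the three commutators $(xy,z)$, $(y,z)$, $(x,z)$ vanishes because $z$ commutes with every element of $R$, so the identity collapses to $3(x,y,z)=0$. For part (1) I would apply the identity to the product $xz$, computing
$$(xz, y) = x\,(z,y) + (x,y)\,z + 3\,(x,z,y);$$
here $(z,y)=0$ since $z\in Z$, the term $(x,y)\,z$ vanishes by the hypothesis $(x,y)=0$, and $3(x,z,y)=-3(x,y,z)=0$ by the previous observation together with skew-symmetry. Hence $(xz,y)=0$, which is exactly the statement that $xz$ commutes with $y$.

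Part (2) is then formal. That $Z$ is an additive subgroup is immediate because $r\mapsto (z,r)$ is additive in $z$. For closure under multiplication, given $z_1,z_2\in Z$ and arbitrary $r\in R$, I would invoke part (1) with $x=z_1$, $y=r$, $z=z_2$: the hypothesis $(z_1,r)=0$ holds since $z_1\in Z$, so the conclusion gives that $z_1z_2$ commutes with $r$. As $r$ is arbitrary, $z_1z_2\in Z$, and $Z$ is a subring.

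For part (3), recall that $C=N\cap Z$, so $C\subseteq Z$ always and it suffices to show $Z\subseteq N$. Given $z\in Z$, the relation $3(x,y,z)=0$ for all $x,y$, combined with the cyclic symmetry of the associator, yields $3\,(z,R,R)=0$; the hypothesis that $R$ has no $3$-torsion then forces $(z,R,R)=0$, that is, $z\in N$. Therefore $Z\subseteq N\cap Z=C$, giving $Z=C$. I expect the only real content to lie in the opening identity and the ensuing fact that $3(x,y,z)=0$ for $z\in Z$; once these are established, parts (1)--(3) are short. The one step demanding care is the sign bookkeeping in the skew-symmetry reduction of the three residual associators to $3(x,y,z)$.
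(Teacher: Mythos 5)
Your proof is correct and follows essentially the same route as the paper: the key identity $(xy,z)-x(y,z)-(x,z)y=3(x,y,z)$ is exactly equation (6) of \cite{K2} that the paper invokes, the observation that it forces $3(x,y,z)=0$ for $z\in Z$ is the paper's central step, and parts (1)--(3) are deduced from it in the same way (your computing $(xz,y)$ rather than $(zx,y)$ is immaterial since $z$ commutes with everything). The only difference is that you rederive the identity from the definitions instead of citing it, and your sign bookkeeping there is right.
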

\begin{proof}
(1)\quad
By \cite[equation (6), p.~131]{K2} we have
\begin{equation}\label{eq x,y,r}
(xy,r)-x(y,r)-(x,r)y=3(x,y,r),\qquad\text{for all $x,y,r\in R$.}
\end{equation}
Taking $r=z,$ we get
\[
3(x,y,z)=0.
\]
But also
\[
(zx,y)=(zx,y)-z(x,y)-(z,y)x=3(z,x,y)=3(x,y,z)=0.
\]
\medskip

\noindent
(2)\quad
By (1), if $x,z\in Z$ and $y\in R,$ then $xz$ commute with $y,$
so $xz\in Z,$ and $Z$ is closed under multiplication.  Hence $Z$ is a subring of $R.$
\medskip

\noindent
(3)\quad
We already saw in the proof of part (1) that $3(x,y,z)=0,$
for all $x,y\in R,$ and $z\in Z,$ so $(x,y,z)=0,$ and $z\in C.$
Of course $C\subseteq Z.$ 
\end{proof}

The following lemma gives some properties of $R$ which we will require later.

\begin{lemma}\label{p of R}
Let $w,x,y,z\in R,$ and $n,n'\in N,$ then
\begin{enumerate}
\item
$[N,R]\subseteq N.$

\item
Nuclear elements commute with associators.

\item
$(w,n)(x,y,z)=-(x,n)(w,y,z).$ 
In particular, $(n',n)(x,y,z)=0.$

\item
If $N$ contains no non-zero elements which are zero divisors in $R,$ then $N$ is commutative.

\item
$(w,n)(w,n)(x,y,z)=0.$ 
\end{enumerate}
\end{lemma}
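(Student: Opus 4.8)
The whole lemma can be run off a single universal identity together with alternativity. The plan is to start from the Teichm\"uller identity
\[
(ab,c,d)-(a,bc,d)+(a,b,cd)=(a,b,c)\,d+a\,(b,c,d),
\]
valid in every nonassociative ring (expand both sides). Feeding a nuclear element into it and using that the associator is skew-symmetric and vanishes whenever one entry lies in $N$, I first record the pull-out rules: for $m\in N$ and all $a,b,c\in R$,
\[
(ma,b,c)=(am,b,c)=m\,(a,b,c),\qquad (a,b,cm)=(a,b,c)\,m,
\]
and, by skew-symmetry, $m$ may be moved into any slot. For (1) I then compute $(nr,s,t)=n\,(r,s,t)=(rn,s,t)$, whence $\big((n,r),s,t\big)=0$ for $n\in N$, i.e.\ $(n,r)\in N$ and $[N,R]\subseteq N$.

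The organizing device for the rest is the inner map $D_n\colon r\mapsto (r,n)$. Since $n\in N$ one checks $(rs,n)=r(s,n)+(r,n)s$, so $D_n$ is a derivation of $R$, and by (1) it takes values in $N$. For (2) I use that a derivation sends an associator to $D_n\big((x,y,z)\big)=\big((x,n),y,z\big)+\big(x,(y,n),z\big)+\big(x,y,(z,n)\big)$; each term has a nuclear entry, hence vanishes, so $D_n$ annihilates every associator and $n$ commutes with $(x,y,z)$. The same Leibniz rule gives the key formula
\[
(w,n)\,(x,y,z)=D_n(w)\,(x,y,z)=D_n\big(w\,(x,y,z)\big)=\big(w\,(x,y,z),\,n\big),
\]
using $D_n((x,y,z))=0$.

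The linchpin is the diagonal case. Taking $a=b=w$ in the Teichm\"uller identity and using $(w,w,y)=(w,w,yz)=0$ yields
\[
w\,(w,y,z)=(w^2,y,z)+(wy,w,z),
\]
exhibiting $w\,(w,y,z)$ as a \emph{sum of associators}; by (2) each summand commutes with $n$, so $\big(w\,(w,y,z),n\big)=0$ and hence $(w,n)\,(w,y,z)=0$ exactly. Note this is strictly stronger than what (3) with $x=w$ would give (that only yields $2\,(w,n)(w,y,z)=0$), so it must be proved directly; this is the step I expect to be the main obstacle, and the honest Teichm\"uller computation above is what avoids the spurious factor of $2$. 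Polarizing $w\mapsto w+x$ in $(w,n)(w,y,z)=0$ and using bilinearity of the commutator and associator then gives (3): $(w,n)(x,y,z)+(x,n)(w,y,z)=0$. Setting $w=n'\in N$ and using $(n',y,z)=0$ gives the stated consequence $(n',n)(x,y,z)=0$.

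Parts (4) and (5) are then quick. For (4): for $n_1,n_2\in N$ we have $(n_1,n_2)\in N$ by (1) and $(n_1,n_2)(x,y,z)=0$ for all $x,y,z$ by the consequence just proved; since $R$ is not associative some $(x,y,z)\neq 0$, so a nonzero $(n_1,n_2)$ would be a divisor of zero, against the hypothesis, forcing $(n_1,n_2)=0$. For (5) I chain (3) with the diagonal: $(w,n)(w,n)(x,y,z)=-(w,n)(x,n)(w,y,z)$; since $\big((w,n),(x,n)\big)\in N$ annihilates associators, $(w,n)(x,n)$ and $(x,n)(w,n)$ act identically on $(w,y,z)$, so after regrouping (all factors nuclear) this equals $-(x,n)\big[(w,n)(w,y,z)\big]=0$ by the diagonal.
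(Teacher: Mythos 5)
Your proof is correct, and for the central part (3) it takes a genuinely different route from the paper. The paper proves (3) in a single asymmetric pass: quoting from Kleinfeld the pull-out rules $(nx,y,z)=n(x,y,z)=(x,y,z)n=(xn,y,z)$ and the derivation identity $(wx,n)=w(x,n)+(w,n)x$, it computes $(w,n)(x,y,z)=((w,n)x,y,z)=-(w(x,n),y,z)=-(x,n)(w,y,z)$ directly for arbitrary $w,x$, so it never needs a diagonal statement and never meets the factor-of-$2$ issue you flag. You instead establish the diagonal identity $(w,n)(w,y,z)=0$ from Teichm\"uller's identity (via $w(w,y,z)=(w^2,y,z)+(wy,w,z)$ together with $D_n$ annihilating associators) and then polarize; this is valid, and it yields the strictly stronger fact $(w,n)(w,y,z)=0$, which the paper does not state. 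Your treatments of (1) and (2) are also more self-contained: you derive the pull-out rules and the Leibniz rule for $D_n$ on associators from Teichm\"uller, where the paper simply cites the relevant equations of Kleinfeld's survey. For (5) the paper runs the chain $(w,n)(w,n)(x,y,z)=-(w,n)(x,n)(w,y,z)=-(w,n)(w,y,z)(x,n)=-(y,n)(w,w,z)(x,n)=0$, ending in alternativity, whereas you commute the nuclear factors $(w,n)$ and $(x,n)$ (using the ``in particular'' clause of (3)) and invoke your diagonal identity; both are sound. In short: your version buys self-containedness and a slightly stronger conclusion at the cost of the extra Teichm\"uller and polarization machinery; the paper's buys brevity by leaning on the quoted identities.
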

\begin{proof}
(1)\&(2)\quad
By \cite[equation (2), p.~129]{K2},
\begin{equation}\label{n(x,y,z)1}
(nx,y,z)=n(x,y,z).
\end{equation}
 Also, by the beginning of the proof of Lemma 4, p.~132 in \cite{K2},  
\begin{equation}\label{n(x,y,z)2}
(nx,y,z)  =  (x,y,z)n\quad\text{and}\quad (xn,y,z)  =  n(x,y,z).
\end{equation}
So $(nx,y,z)=(xn,y,z),$ hence (1) holds, and also $n(x,y,z)=(x,y,z)n,$ so (2) holds.
\medskip

\noindent
(3)\quad 
By \cite[equation (6), p.~131]{K2}, 
\begin{equation}\label{wx,n}
(wx,n)=w(x,n)+(w,n)x.
\end{equation}
Now  
\begin{alignat*}{3}
(w,n)(x,y,z)&=((w,n)x,y,z)& &\quad(\text{by (1) and equation \eqref{n(x,y,z)1}})\\
&=-(w(x,n),y,z)& &\quad\text{(by equation \eqref{wx,n} and (1)}) \\
&=-(x,n)(w,y,z)& &\quad\text{(by equation \eqref{n(x,y,z)2} and (1))}.
\end{alignat*}
Taking $w=n'\in N,$ we get the last statement of (3).
\medskip

\noindent
(4)\quad
This is immediate from the last statement of (3) and the hypothesis that $R$ is not associative.
\medskip

\noindent
(5)\quad
We have
\begin{alignat*}{3}
(w,n)(w,n)(x,y,z)&=-(w,n)(x,n)(w,y,z)& &\quad\text{(by (3))}\\
&=-(w,n)(w,y,z)(x,n)& &\quad\text{(by (2))}\\
&=(w,n)(y,w,z)(x,n)& &\quad\text{(associators are skew-symmetric)}\\
&=-(y,n)(w,w,z)(x,n)& &\quad\text{(by (3))}\\
&=0& &\quad \text{(because $R$ is alternative)}.
\end{alignat*}
\end{proof}

We also need the following three lemmas.

\begin{lemma}\label{lem commutators}
Assume that
\begin{itemize}
\item[(i)]
no non-zero commutator of $R$ is nilpotent;

\item[(ii)]
non-zero elements of the nucleus of $R$ are not divisors of zero in $R.$
\end{itemize}
Then a non-zero  commutator in $R$  is not a divisor of zero in $R.$
\end{lemma}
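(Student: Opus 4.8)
The plan is to reduce the statement for an arbitrary non-zero commutator $v$ to the corresponding statement for its fourth power $v^4$, which lies in the nucleus $N$ by Facts \ref{facts}(1). The point is that the two hypotheses are precisely what makes this reduction work: hypothesis (i) will force $v^4$ to be non-zero, and hypothesis (ii) then guarantees that a non-zero nuclear element such as $v^4$ is not a divisor of zero.

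First I would observe that $v^4 \neq 0$: otherwise the non-zero commutator $v$ would be nilpotent, contradicting (i). Since $v^4 \in N$ by Facts \ref{facts}(1), hypothesis (ii) shows that $v^4$ is not a divisor of zero. Now suppose, toward a contradiction, that $v$ is a divisor of zero; say $va = 0$ with $a \neq 0$ (the case $av = 0$ is handled symmetrically). By Artin's theorem, Facts \ref{facts}(3), the subring $\langle v, a \rangle$ generated by the two elements $v$ and $a$ is associative, so working inside it we may rebracket and compute $v^4 a = v^3(va) = 0$ (respectively $a v^4 = (av)v^3 = 0$). Since $v^4$ is a non-zero element of $N$, and hence not a divisor of zero, this forces $a = 0$, the desired contradiction. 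Therefore $v$ is not a divisor of zero.

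The argument is short, and the only delicate point is justifying the passage from $v$ to $v^4$ in a non-associative ring: a priori the rebracketing $v^4 a = v^3(va)$ is not available to us. This is resolved by noting that the relation $va = 0$ takes place entirely inside the two-generated subring $\langle v, a\rangle$, which is associative by Facts \ref{facts}(3); this is exactly the step that lets us apply the nuclear hypothesis (ii) to $v^4$ rather than to $v$ itself. I expect this two-generated associativity to be the crux, with everything else being immediate from the two Facts and the two hypotheses.
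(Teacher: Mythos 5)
Your proposal is correct and follows essentially the same route as the paper's own proof: pass to $v^4$, which is non-zero by hypothesis (i) and lies in $N$ by Facts \ref{facts}(1), then use Artin's theorem to rebracket $v^4t=v^3(vt)=0$ inside the associative subring generated by $v$ and $t$, and conclude from hypothesis (ii). The paper's version is just a more compressed statement of exactly this argument.
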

\begin{proof}
Let $v$ be a non-zero commutator. Then, by Facts \ref{facts}(1), $0\ne v^4\in N.$  
If $t\in R$ is such that $vt = 0,$ then since the subring generated by $v$ and $t$ is associative,
$v^4t   =  0,$ so $t = 0.$
\end{proof}

\begin{lemma}\label{lem nucleus}
Assume that
\begin{enumerate}
\item
no non-zero commutator of $R$ is nilpotent;

\item
$R$ has an identity;

\item
the nucleus of $R$ is a simple ring.
\end{enumerate}
Then   the nucleus of $R$ is a field, and
a non-zero  commutator in $R$  is not a divisor of zero in $R.$
\end{lemma}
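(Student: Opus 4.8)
The plan is to first prove that the nucleus $N$ is a field, and then to deduce the statement about commutators from Lemma \ref{lem commutators}. Since $R$ has an identity $1$ and $(1,R,R)=0$, we have $1\in N$, so $N$ is an associative ring with identity; by hypothesis it is simple. Thus it suffices to show that $N$ is commutative, since a commutative simple ring with identity is a field: any nonzero $n\in N$ generates a nonzero (hence unit) ideal, whence $n$ is invertible.

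To prove commutativity I would exploit the last assertion of Lemma \ref{p of R}(3), namely that $(n',n)(x,y,z)=0$ for all $n,n'\in N$ and all $x,y,z\in R$; that is, every commutator of nuclear elements annihilates every associator on the left. Recall also that by Lemma \ref{p of R}(1) each such commutator $(n',n)$ lies in $N$. Since $R$ is not associative, fix a nonzero associator $a=(x,y,z)$, and let $K$ be the two-sided ideal of $N$ generated by the commutators $(n',n)$, $n,n'\in N$. By simplicity, either $K=0$ or $K=N$. The key point is to rule out $K=N$: if $K=N$ then $1=\sum_i p_i(n_i',n_i)q_i$ with $p_i,q_i\in N$, and therefore $a=1\cdot a=\sum_i p_i(n_i',n_i)(q_i a)$, where I use that elements of $N$ associate in every position (the associator being skew-symmetric, $(R,n,R)=(R,R,n)=0$ for $n\in N$). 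Now by equation \eqref{n(x,y,z)1} each $q_i a=q_i(x,y,z)=(q_i x,y,z)$ is again an associator, so $(n_i',n_i)(q_i a)=0$ for every $i$, forcing $a=0$, a contradiction. Hence $K=0$, i.e.\ $N$ is commutative, and therefore $N$ is a field.

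Finally I would transfer this to zero divisors. If $n\in N$ is nonzero then it is invertible in $N$, and for any $r\in R$ the relation $nr=0$ gives $r=(n^{-1}n)r=n^{-1}(nr)=0$ by nuclear associativity (with $n^{-1}\in N$), and likewise $rn=0$ implies $r=0$; thus no nonzero nuclear element is a divisor of zero in $R$. This is precisely hypothesis (ii) of Lemma \ref{lem commutators}, while hypothesis (i) there is our assumption (1). Lemma \ref{lem commutators} then yields that a nonzero commutator of $R$ is not a divisor of zero, completing the proof.

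I expect the main obstacle to be the commutativity step. The delicate ingredient is the observation, via equation \eqref{n(x,y,z)1}, that left multiplication of an associator by a nuclear element again produces an associator; this is what allows the ``annihilates all associators'' property of nuclear commutators to survive multiplication by arbitrary nuclear elements, and hence to propagate through the generated ideal $K$, so that the simplicity hypothesis can be brought to bear.
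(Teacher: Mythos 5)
Your proof is correct and follows essentially the same route as the paper: show that $N$ is commutative using its simplicity together with Lemma \ref{p of R}(3), conclude that $N$ is a field, deduce that nonzero nuclear elements are not zero divisors, and invoke Lemma \ref{lem commutators}. The only divergence is in the commutativity step: the paper applies simplicity to the annihilator ideal $I=\{n\in N\mid n(x,y,z)=0\ \text{for all}\ x,y,z\in R\}$, which contains all nuclear commutators and cannot equal $N$ because $\one\notin I$, whereas you apply it to the ideal generated by the nuclear commutators and must therefore verify (via $q(x,y,z)=(qx,y,z)$) that the annihilation property propagates through the expression $\one=\sum_i p_i(n_i',n_i)q_i$ --- both arguments are valid, the paper's being marginally shorter.
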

\begin{proof}
First we show that $N$ is a field.
Recall from Lemma \ref{p of R} that nuclear elements commute with associators.  Set
\[
I:=\{n\in N\mid n(x,y,z)=0,\text{ for all }x,y,z\in R\}.
\] 
Then $(n'n)(x,y,z)=n'(n(x,y,z)=0,$ and $(nn')(x,y,z)=(x,y,z)(nn')=((x,y,z)n)n'=0,$ 
for $n\in I,\ n'\in N, \ x,y,z\in R.$ So $I$ is an ideal of $N.$ 
Hence $I=0,$ since $\one\notin I.$
However by Lemma \ref{p of R}(3), any commutator of elements in $N$ is in $I,$
so $N$ must be commutative. This plus simple is enough to make $N$ into a field.
\medskip

\noindent
Now
let $0\ne n\in N,$ such that $nt=0,$ for some $t\in R.$
Then $t=\one t=(n^{-1}n)t=n^{-1}(nt)=0.$  Note that
the identity of $R$ is the identity of $N.$
Hence $R$ satisfies the hypotheses of Lemma \ref{lem commutators},
and we are done.
\end{proof}

\begin{lemma}\label{commutative}
Assume that $2x=0$ implies $x=0,$ for all $x\in R.$
Then the  subring of $R$ generated by a triple $a,b,c$ 
of commuting elements is commutative.
\end{lemma}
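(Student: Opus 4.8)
The plan is to reduce the statement to a single associator identity and then induct on the length of monomials in $a,b,c$. Throughout I use the paper's notation: $(u,v)=uv-vu$ for the commutator and $(u,v,w)$ for the associator. The only inputs will be the identity \eqref{eq x,y,r}, namely $(xy,r)-x(y,r)-(x,r)y=3(x,y,r)$, the skew-symmetry of the associator, and the hypothesis that $R$ has no $2$-torsion.

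First I would isolate the key observation, valid for any three pairwise commuting elements $x,y,z\in R$: $3(x,y,z)=0$, and consequently $(xy,z)=0$, i.e.\ $z$ commutes with $xy$ (and, by relabeling, each of the three commutes with the product of the other two). To see this, set $r=z$ in \eqref{eq x,y,r}; since $(y,z)=(x,z)=0$ this gives $(xy,z)=3(x,y,z)$, while interchanging $x$ and $y$ gives $(yx,z)=3(y,x,z)=-3(x,y,z)$. As $xy=yx$ the two left-hand sides agree, so $6(x,y,z)=0$; the absence of $2$-torsion then forces $3(x,y,z)=0$, whence $(xy,z)=0$. This is the one place the torsion hypothesis is used, and the little trick ``$6w=0\Rightarrow 3w=0$'' is really the crux of the whole lemma.

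Then I would prove, by induction on total degree, that any two monomials in $a,b,c$ commute; since the subring $S$ generated by $a,b,c$ is the additive span of such monomials and the commutator is biadditive in each slot, this yields that $S$ is commutative. For the base case, monomials of degree $1$ are the generators, which commute by hypothesis. For the inductive step, given monomials $p,q$ of total degree $n$ with $\deg p\ge 2$ (which we may assume once $n\ge 3$), factor $p=p_1p_2$ into monomials of smaller degree. The inductive hypothesis, applied to the pairs $\{q,p_1\}$, $\{q,p_2\}$, $\{p_1,p_2\}$ — each of strictly smaller total degree — gives $(q,p_1)=(q,p_2)=(p_1,p_2)=0$, so $p_1,p_2,q$ pairwise commute; the key observation then yields $3(p_1,p_2,q)=0$, and feeding all of this into \eqref{eq x,y,r} in the form $(p_1p_2,q)=p_1(p_2,q)+(p_1,q)p_2+3(p_1,p_2,q)$ gives $(p,q)=0$.

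The main obstacle is purely the bookkeeping of the induction: one must phrase the inductive hypothesis in terms of the total degree of a \emph{pair} of monomials rather than the degree of a single element, so that the three commutators $(q,p_1),(q,p_2),(p_1,p_2)$ all sit at strictly smaller total degree and the key observation becomes applicable. Once the statement is set up this way the alternative-ring identities do all the work and no delicate computation remains; in particular Artin's theorem (Facts \ref{facts}(3)) is not even needed, since degenerate associators such as $(a,b,a)$ vanish by alternativity alone.
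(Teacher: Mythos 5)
Your proof is correct and follows the paper's own route: the identical key step (setting $r=z$ in \eqref{eq x,y,r}, comparing $(xy,z)$ with $(yx,z)$, and using the absence of $2$-torsion to get $3(x,y,z)=0$ and hence $(xy,z)=0$ for pairwise commuting elements) followed by the induction on the length of monomials that the paper dismisses as ``a standard argument.'' Your write-up merely makes that standard argument explicit, with the correct bookkeeping on the total degree of a pair of monomials.
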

\begin{proof}
We claim that for any triple of commuting elements $x,y,z\in R$ we have
\[\tag{$*$}
(xy,z)=0.
\]
Indeed, by \cite[equation (6), p.~131]{K2},
$(xy,z)=3(x,y,z).$ 
But also $(yx,z)=3(y,x,z)=-3(x,y,z).$  
Hence $2(3(x,y,z))=0,$ so
$3(x,y,z)=0.$ Hence $(xy,z)=0.$
A standard argument using $(*)$ and the length of
an element in the generators $\{a,b,c\}$ yields the lemma. 
\end{proof}
\medskip

\noindent
\begin{proof}[\bf Proof of Proposition \ref{prop 1.4}]
Part (1) follows from Lemmas \ref{lem commutators} and \ref{lem nucleus}, respectively.
Part (2) follows from Lemma \ref{lem nucleus}.  Suppose $R$ satisfies hypothesis
\ref{hyps 1.2}(T1).  By Lemma \ref{lem Z a subring}(3), $Z=C,$ so of course
hypothesis (2) of Theorem \ref{thm 1.1} holds.  Finally suppose $R$ satisfies hypothesis
\ref{hyps 1.2}(T2).  Let $x,y,z$ be as in hypothesis (2) of Theorem \ref{thm 1.1}.
By Lemma \ref{commutative}, the subring generated by $x,y,z$ is commutative and
hence associative, so hypothesis (2) of Theorem \ref{thm 1.1} holds.
\end{proof}

\section{The proof of Theorem 1.1}

In this section we prove Theorem \ref{thm 1.1}.  So we let $R$ 
be an alternative ring which is not associative.
We consider the following hypotheses. 

\[\tag{A}
\text{Non-zero commutators are not zero-divisors in R.}
\]
\[\tag{B}
\begin{aligned}
&\text{If $x\in R,$ and $y,z$ are in the commutative center of $R,$}\\
&\text{then  $(xy)z=x(yz)$.}
\end{aligned}
\]
Note that if hypothesis (B) holds in $R$ then $R$ is not commutative.

\begin{lemma}\label{lem 3.1}
Let $x,y,t\in R.$ Then 
\begin{enumerate}
\item
$((x,y),x,t)=(y,x,(x,t);$

\item
$(x,y)x=(x,yx);$

\item
if hypothesis (A) holds, and $xt=0,$ then $(x,yx)t=0.$
\end{enumerate}
\end{lemma}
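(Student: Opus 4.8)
The plan is to prove the three identities in order, since (2) feeds into (3), and (1) is an independent warm-up establishing a linearized associator identity.

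For part (1), the goal is the identity $((x,y),x,t)=(y,x,(x,t))$. I would begin by expanding the commutator $(x,y)=xy-yx$ inside the associator, using additivity of the associator in its first slot, to get $((x,y),x,t)=(xy,x,t)-(yx,x,t)$. The idea is then to massage each term using the skew-symmetry of the associator (Facts/Lemma \ref{p of R}) together with the Teichm\"uller-type or linearized associator identities that hold in alternative rings. The key algebraic input will be the standard identity expressing $(xy,x,t)$ in terms of products of $x$ with associators of the form $(y,x,t)$ and $(x,x,t)$; since $(x,x,t)=0$ by alternativity, the computation should collapse. I expect the right-hand side $(y,x,(x,t))$ to emerge after reorganizing the surviving terms and recognizing $(x,t)=xt-tx$. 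This is essentially a bookkeeping exercise with the linearized alternative laws, so no single step should be an obstacle, but care is needed to track signs from skew-symmetry.

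For part (2), the target is $(x,y)x=(x,yx)$. Here I would simply expand both sides: the left side is $(xy-yx)x=(xy)x-(yx)x$, and the right side is $(x,yx)=x(yx)-(yx)x$. Subtracting, the claim reduces to $(xy)x=x(yx)$, which is precisely the statement that the associator $(x,y,x)=0$. This vanishing is immediate in an alternative ring because the associator is skew-symmetric and hence alternating, so $(x,y,x)=-(x,y,x)$ forces it to be zero (or one invokes the flexible law directly). This part is the quickest.

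For part (3), assuming hypothesis (A) and $xt=0$, I must show $(x,yx)t=0$. The natural route is to rewrite $(x,yx)$ using part (2) as $(x,y)x$, so that the claim becomes $((x,y)x)t=0$. Since $x$ and $t$ generate an associative subring by Facts \ref{facts}(3) and $xt=0$, I would try to move the multiplication so as to expose the factor $xt$. The subtlety is that $(x,y)x$ is a product of three elements and associativity is not available across all of them; the cleanest approach is to use part (1) to relate the associator $((x,y),x,t)$ to $(y,x,(x,t))$, and to expand $((x,y)x)t=((x,y),x,t)+(x,y)(xt)=((x,y),x,t)$ since $xt=0$. Then part (1) rewrites this as $(y,x,(x,t))=(y,x,xt-tx)=-(y,x,tx)$, and I would analyze $(y,x,tx)$ using the associativity of the subring generated by $t$ and $x$ together with $xt=0$ to conclude it vanishes. \textbf{The main obstacle} I anticipate is precisely this last vanishing: showing $(y,x,tx)=0$ (equivalently handling the $tx$ term) cleanly, since it involves three distinct elements and one must exploit $xt=0$ through the two-generated associativity rather than through a single associator identity. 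Where hypothesis (A) enters is likely as a safeguard to rule out degenerate behavior, or it may be invoked only in the subsequent application of this lemma rather than within the proof itself.
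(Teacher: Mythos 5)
Your parts (1) and (2) follow essentially the paper's route: (1) splits $((x,y),x,t)$ into $(xy,x,t)-(yx,x,t)$ and appeals to the standard alternative-ring identities $(xy,x,t)=(y,x,t)x=(y,x,xt)$ and $(yx,x,t)=x(y,x,t)=(y,x,tx)$ (equations (4) and (5) of \cite{K2}), and (2) reduces to $(x,y,x)=0$. Both are fine.

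Part (3), however, has a genuine gap, and it sits exactly where you flagged the ``main obstacle.'' Your plan reduces the claim to showing $(y,x,tx)=0$, and you propose to get this from the associativity of the subring generated by $x$ and $t$. That cannot work: $(y,x,tx)$ involves the third element $y$, which does not lie in that subring. What two-generator associativity actually gives you is $x(tx)=(xt)x=0$, so $(y,x,tx)=(yx)(tx)-y\bigl(x(tx)\bigr)=(yx)(tx)$, and there is no reason for $(yx)(tx)$ to vanish unless $tx=0$. The missing idea --- and the actual role of hypothesis (A), which you guessed was ``likely a safeguard'' or only used later --- is to prove $tx=0$ first: since $xt=0$ one has $(x,t)=-tx$, and by Artin's theorem $(x,t)^2=(tx)(tx)=t(xt)x=0$; a non-zero commutator cannot be a zero divisor under (A) (in particular its square is non-zero, cf.\ Remark \ref{rem 3.4}), so $(x,t)=0$. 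With $(x,t)=0$ the right-hand side of part (1) is $(y,x,0)=0$, hence $((x,y),x,t)=0$, and expanding this associator together with $xt=0$ gives $\bigl((x,y)x\bigr)t=0$, which is the claim by part (2). Without the step deducing $(x,t)=0$ from hypothesis (A), your argument stalls at $(yx)(tx)$.
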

\begin{proof}
(1)\quad
By   \cite[equation (4), p.~130]{K2} (and the fact that $R$ is alternative),
\[
(xy,x,t)=(y,x,t)x=(y,x,xt),
\]
and by \cite[equation (5), p.~130]{K2},
\[
(yx,x,t)=x(y,x,t)=(y,x,tx).
\]
Sustracting we get (1).
\medskip

\noindent
(2)\quad
This is immediate from Fact \ref{facts}(3).
\medskip

\noindent
(3)\quad
Assume that hypothesis (A) holds,
and that $xt=0.$  We claim that $(x,t)=0.$
Indeed, by Fact \ref{facts}(3), $(x,t)^2=(-tx)^2=t(xt)x=0,$
so, by hypothesis (A), $(x,t)=0.$

By part (1), $((x,y),x,t)=0.$ Since $xt=0,$ we see that
$((x,y)x)t=0,$ so (3) follows from (2).
\end{proof}

\begin{lemma}\label{lem R-Z nzd}
Assume that hypothesis (A) holds in $R.$
Let $x\in R,$ with $x\notin Z.$ Then $x$
is not a zero divisor in $R.$
\end{lemma}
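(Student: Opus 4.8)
The plan is to use hypothesis (A) twice: once to manufacture a nonzero commutator that right-annihilates a putative zero divisor $t$, and once to conclude that such a commutator cannot be a zero divisor, forcing $t=0$.

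First I would record a symmetry that reduces the problem to one-sided products. Exactly as in the proof of Lemma \ref{lem 3.1}(3), if $xt=0$ then $(x,t)=-tx$, and by Fact \ref{facts}(3) the subring generated by $x,t$ is associative, so $(x,t)^2=(tx)(tx)=t\bigl((xt)x\bigr)=0$. A nonzero nilpotent commutator would be a zero divisor, so hypothesis (A) forces $(x,t)=0$, i.e. $tx=0$. The same computation with the two products interchanged gives $tx=0\Rightarrow xt=0$. Hence the conditions $xt=0$ and $tx=0$ are equivalent, and it suffices to prove that $xt=0$ implies $t=0$.

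Next I would locate a useful commutator built from $x$. Since $x\notin Z$ we have $x\ne 0$ (as $0\in Z$) and $(x,y)\ne 0$ for some $y\in R$. I claim there is a $y$ with $(x,y)x\ne 0$. Otherwise $(x,y)x=0$ for every $y$; choosing $y$ with $(x,y)\ne 0$ would produce a nonzero commutator right-annihilating the nonzero element $x$, contradicting hypothesis (A). Fix such a $y$ and set $v:=(x,y)x$, which by Lemma \ref{lem 3.1}(2) equals $(x,yx)$ and is therefore a nonzero commutator.

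Finally I would combine these. Assuming $xt=0$, Lemma \ref{lem 3.1}(3) gives $(x,yx)t=0$, that is $vt=0$; as $v$ is a nonzero commutator, hypothesis (A) says $v$ is not a zero divisor, whence $t=0$. By the first paragraph this also covers $tx=0$, so $x$ is not a zero divisor. The one genuinely delicate point is the claim in the third paragraph---that $x\notin Z$ yields some $y$ with $(x,y)x\ne 0$ rather than merely $(x,y)\ne 0$; everything else is a direct application of hypothesis (A) together with Lemma \ref{lem 3.1}.
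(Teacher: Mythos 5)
Your proposal is correct and follows essentially the same route as the paper: pick $y$ with $(x,y)\ne 0$, use Lemma \ref{lem 3.1}(2) and (3) to get $(x,yx)=(x,y)x$ and $(x,yx)t=0$, and apply hypothesis (A) twice. The extra symmetry argument in your first paragraph is the content of Remark \ref{rem 3.4}, and your ``delicate point'' about finding $y$ with $(x,y)x\ne 0$ is handled in the paper by the same observation, namely that hypothesis (A) applied to the nonzero commutator $(x,y)$ and the nonzero element $x$ gives $(x,y)x\ne 0$ directly.
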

\begin{proof}
Let $x\in R$ such that $x\notin Z.$ Then 
there exists $y\in R,$ with $(x,y)\ne 0.$  Suppose that
$xt=0,$ for some $t\in R.$  By Lemma \ref{lem 3.1}(3),
$(x,yx)t=0,$ and by Lemma \ref{lem 3.1}(2), $(x,yx)=(x,y)x.$
By hypothesis (A), $(x,yx)\ne 0,$ so by hypothesis (A)
again, $t=0.$
\end{proof}

\begin{lemma}\label{lem Z nzd}
Assume that hypotheses (A) and (B) hold in $R.$
Let $0\ne z\in Z,$ then $z$ is not a zero divisor in $R.$
\end{lemma}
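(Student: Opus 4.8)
The plan is to reduce to the case where $t$ itself lies in the commutative center $Z$, and then to exploit hypothesis (B) together with the observation that left-multiplying a commutator by a central element again produces a commutator. The whole argument is a short dichotomy once that observation is in place.

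First I would dispose of an easy reduction. Suppose $zt=0$ with $0\ne z\in Z$; I want to conclude $t=0$, so I assume $t\ne 0$ and seek a contradiction. Since $z$ is central we also have $tz=zt=0$. If $t\notin Z$, then Lemma~\ref{lem R-Z nzd} tells us $t$ is not a zero divisor, and $tz=0$ would force $z=0$, contrary to hypothesis. Hence I may assume $t\in Z$. Moreover, since (B) holds, $R$ is not commutative (a commutative alternative ring satisfying (B) would be associative), so there exists a nonzero commutator $v=(a,b)$.

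The key computation is the identity $tv=(ta,b)$, i.e.\ that multiplying the commutator $v$ by the central element $t$ yields the commutator of $ta$ and $b$. I would expand $(ta,b)=(ta)b-b(ta)$, use $ta=at$ since $t\in Z$, and rewrite $(ta)b$ and $b(at)$ in terms of associators; the two error terms $(t,a,b)$ and $(b,a,t)$ cancel by skew-symmetry of the associator in an alternative ring, collapsing $(ta,b)$ to $t(ab)-t(ba)=tv$. Thus $tv$ is again a commutator. Now the dichotomy finishes things. If $tv=0$, then $v$ is a \emph{nonzero} commutator annihilated by $t$, so hypothesis (A) forces $t=0$, contradicting $t\ne 0$. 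If instead $tv\ne 0$, set $w:=tv$, a nonzero commutator; applying (B) to $v\in R$ and $t,z\in Z$ gives $(vt)z=v(tz)=0$, and since $vt=tv$ and $z$ is central this becomes $zw=z(tv)=0$. Then (A) applied to the nonzero commutator $w$ forces $z=0$, again a contradiction. Either way $t=0$, so $z$ is not a zero divisor.

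The main obstacle is the commutator identity $tv=(ta,b)$: one must verify carefully that the associator corrections vanish, relying on the skew-symmetry of the associator, and then transport the relation $zt=0$ across the product $tv$ via hypothesis (B). Once that identity and the transport step are secured, the case split on whether $tv$ vanishes is immediate, and hypothesis (A) closes both branches.
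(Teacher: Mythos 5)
Your proof is correct, but it takes a genuinely different route from the paper's at the decisive step. Both arguments begin with the same reduction (if $zt=0$ with $t\neq 0$, then $t$ must itself lie in $Z$ by Lemma~\ref{lem R-Z nzd}), and both end by contradicting hypothesis (A) via a non-zero commutator. The paper then argues as follows: it shows $xz\in Z$ for a suitable $x$ with $(x,y)\neq 0$ (using (B) and the auxiliary central annihilator), and then applies the identity $(xy,r)-x(y,r)-(x,r)y=3(x,y,r)$ twice to extract $(x,y)z=0$. You instead prove the identity $t(a,b)=(ta,b)$ for central $t$ --- a direct expansion in which the two associator error terms $(t,a,b)$ and $(b,a,t)$ cancel by skew-symmetry --- so that $tv$ is again a commutator, and then run a clean dichotomy: if $tv=0$ then (A) kills $t$; if $tv\neq 0$ then (B) transports $tz=0$ to $(vt)z=0$, exhibiting the non-zero commutator $w=tv$ as annihilated by $z$, and (A) kills $z$. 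Your version avoids the linearized identity \eqref{eq x,y,r} entirely and does not need the intermediate fact that $xz\in Z$; it relies only on the definition of the associator, its skew-symmetry, and one application of (B). The paper's version, by contrast, leans on the Kleinfeld identity but needs no case split. Both are complete; yours is arguably the more elementary and self-contained of the two.
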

\begin{proof}
Let $0\ne z\in Z,$ and suppose that $z$ is a divisor of zero. 
By Lemma \ref{lem R-Z nzd},
there exists non-zero $w\in Z,$ with $zw=0.$

Let $(x,y)$ be a non-zero commutator in $R.$
By Lemma \ref{lem R-Z nzd},
$xz\ne 0.$  By hypothesis (B),
$(xz)w=x(zw)=0.$ Hence by Lemma \ref{lem R-Z nzd} again,
$xz\in Z.$  

By equation \eqref{eq x,y,r}, 
\[
3(x,y,z)=(xy,z)-x(y,z)-(x,z)y=0
\]
But then also $3(x,z,y)=0,$ so by  equation \eqref{eq x,y,r} again,
\[
0=3(x,z,y)=(xz,y)-x(z,y)-(x,y)z.
\]
Since $(xz,y)=0=x(z,y),$ we see that $(x,y)z=0,$ contradicting hypothesis (A).
\end{proof}
\medskip

\begin{proof}[\bf Proof of Theorem \ref{thm 1.1}]\hfill
\medskip

Assume that hypotheses (A) and (B) hold in $R.$
By Lemmas \ref{lem R-Z nzd} and \ref{lem Z nzd},
$R$ contains no divisors of zero.  This show part (a).
 Part (b) follows
from part (a) and \cite[Theorem A]{BK}.
\end{proof}

\begin{remark}\label{rem 3.4}
As usual, a non-zero element $x\in R$ is a left zero divisor if there
exists a non-zero $y\in R$ 
such that $xy=0.$ Right zero divisors are similarly defined.  

In this paper, when we say that a non-zero element $r\in R$ is {\it not a zero divisor}
we mean that it is {\it either not a left zero divisor, or not a right zero divisor}.

Note that if a non-zero $r\in R$ is not a zero divisor, then $r^2\ne 0.$

Suppose now that we assume that  non-zero commutators in $R$ are not zero divisors.
Then $(x,y)^2\ne 0,$ for any non-zero commutator $(x,y).$  Now let $r\in R$ such
that $r$ is not a zero divisor.  Suppose that   $r$ is not a left zero divisor.
We show that $r$ is not a right zero divisor either.  Assume $sr=0,$ for some non-zero
$s\in R.$ Then $rs=(r,s)\ne 0.$ But  (since subrings generated
by two elements are associative), $(r,s)^2=r(sr)s=0,$
a contradiction.  Similarly any non-zero element which is not a right zero divisor
is also not a left zero divisor.

We thus see that in this paper, a non-zero element which is not a zero divisor (as defined
above) is {\it not a right and not a left zero divisor}.
\end{remark}

\subsection*{Acknowledgement}
We would like to thank Professor Holger Petersson for
remarks that improved the results of this paper.
We thank the referee for a thorough reading of the paper and for asking us to clarify certain points
in the paper.


\end{document}